    \newcommand{\href}[2]{#2}
\theoremstyle{plain}
  \newtheorem{lemma}[equation]{Lemma}
  \newtheorem{theorem}[equation]{Theorem}
    \newtheorem{conjecture}[equation]{Conjecture}
\theoremstyle{definition}
\theoremstyle{remark}
\renewcommand{\thesection}{\arabic{section}}
\renewcommand{\theequation}{\thesection.\arabic{equation}}
 \DeclareFontFamily{U}{manual}{}
 \DeclareFontShape{U}{manual}{m}{n}{ <->  manfnt }{}
 \newcommand{\manfntsymbol}[1]{%
    {\fontencoding{U}\fontfamily{manual}\selectfont\symbol{#1}}}
\endgroup\end{trivlist}}
  \DeclareFontFamily{OT1}{pzc}{}
  \DeclareFontShape{OT1}{pzc}{m}{it}{<-> s * [1.100] pzcmi7t}{}
  \DeclareMathAlphabet{\mathpzc}{OT1}{pzc}{m}{it}
\newif\ifhascomments \hascommentstrue
  \newcommand{\dan}[1]{{\color{red}[[\ensuremath{\bigstar\bigstar\bigstar} #1]]}}
  \newcommand{\matt}[1]{{\color{red}[[\ensuremath{\spadesuit\spadesuit\spadesuit} #1]]}}
  \newcommand{\dan}[1]{}
  \newcommand{\matt}[1]{}
\renewcommand{\>}{\rangle} 
\newcommand{\Qbar}{\overline{\mathbb{Q}}}
\renewcommand{\AA}{\mathbb{A}}
\DeclareMathOperator{\Aut}{\ensuremath{\mathcal{A}\kern-.125em\mathpzc{ut}}}
\newcommand{\bF}{\mathbb{F}}
\newcommand{\bZ}{\mathbb{Z}}
\newcommand{\dra}{\dashrightarrow}
\renewcommand{\emptyset}{\varnothing}
\DeclareMathOperator{\Endo}{\ensuremath{\mathcal{E}\kern-.125em\mathpzc{nd}}}
\newcommand{\OO}{\mathcal{O}}
\newcommand{\bP}{\mathbb{P}}
\DeclareMathOperator{\Hom}{\ensuremath{\mathcal{H}\kern-.125em\mathpzc{om}}}
\newcommand{\lra}{\longrightarrow}
\newcommand{\N}{\mathbb{N}}
\newcommand{\NN}{\mathbb N}
\renewcommand{\O}{\mathcal O}
\newcommand{\PP}{\mathbb{P}}
\newcommand{\QQ}{\mathbb Q}
\newcommand{\QQbar}{\overline{\mathbb Q}}
\newcommand{\RR}{\mathbb R}
\renewcommand{\setminus}{\smallsetminus}
\DeclareMathOperator{\spec}{Spec}
\newcommand{\X}{\mathcal{X}}
\newcommand{\Y}{\mathcal{Y}}
\newcommand{\ZZ}{\mathbb{Z}}
 \def\are[#1]{\ar[#1]^{\txt{\'et}}}
 \def\areh[#1]{\ar[#1]|{\txt{$H$-eq}}^{\txt{\'et}}}
 \def\ars[#1]{\ar@{->>}[#1]}
 \newcommand{\dplus}{\ar@{}[d]|{\mbox{$\oplus$}}}
 \newcommand{\dtimes}{\ar@{}[d]|{\mbox{$\times$}}}
\newcommand{\FF}{{\mathbb F}}
\newcommand{\cX}{\mathcal{X}}
\title{Dynamical Uniform Bounds for Fibers\\and a Gap Conjecture}
\author{Jason Bell}
\address{University of Waterloo \\
Department of Pure Mathematics \\
Waterloo, Ontario \\
N2L 3G1, Canada}
\email{jpbell@uwaterloo.ca}
\author{Dragos Ghioca}
\address{University of British Columbia\\
Department of Mathematics\\
Vancouver, BC\\
V6T 1Z2, Canada}
\email{dghioca@math.ubc.ca}
\author{Matthew Satriano}
\address{University of Waterloo \\
Department of Pure Mathematics \\
Waterloo, Ontario \\
N2L 3G1, Canada}
\email{msatrian@uwaterloo.ca}
\thanks{The authors were partially supported by Discovery Grants from the National Science and Engineering Research Council of Canada.}
\begin{document}

\begin{abstract}
We prove a uniform version of the Dynamical Mordell--Lang Conjecture for \'etale maps; also, we obtain a gap result for the growth rate of heights of points in an orbit along an arbitrary endomorphism of a quasiprojective variety defined over a number field. More precisely, for our first result, we assume $X$ is a quasi-projective variety defined over a field $K$ of characteristic $0$, endowed with the action of an \'etale endomorphism $\Phi$, and $f\colon X\lra Y$ is a morphism with $Y$ a quasi-projective variety defined over $K$. Then for any $x\in X(K)$, if for each $y\in Y(K)$, the set $S_y:=\{n\in \N\colon f(\Phi^n(x))=y\}$ is finite, then there exists a positive integer $N$ such that $\#S_y\le N$ for each $y\in Y(K)$. For our second result, we let $K$ be a number field, $f:X\dra \PP^1$ is a rational map, and $\Phi$ is an arbitrary endomorphism of $X$. If $\OO_\Phi(x)$ denotes the forward orbit of $x$ under the action of $\Phi$, then either $f(\OO_\Phi(x))$ is finite, or $\limsup_{n\to\infty} h(f(\Phi^n(x)))/\log(n)>0$, where $h(\cdot)$ represents the usual logarithmic Weil height for algebraic points.  
\end{abstract}

\maketitle

\section{Introduction}

As usual in algebraic dynamics, given a self-map $\Phi\colon X\lra X$ of a quasi-projective variety $X$, we denote by $\Phi^n$ the $n$-th iterate of $\Phi$. Given a point $x\in X$, we let $\OO_\Phi(x)=\{\Phi^n(x)\colon n\in\NN\}$ be the orbit of $x$. Recall that a point $x$ is periodic if there exists some $n\in\N$ such that $\Phi^n(x)=x$; a point $y$ is preperiodic if there exists $m\in\N$ such that $\Phi^m(y)$ is periodic. Our first result is the following.

\begin{theorem}
\label{thm:uniform-bound-fibers}
Let $X$ and $Y$ be quasi-projective varieties defined over a field $K$ of characteristic $0$, let $f\colon X\lra Y$ be a morphism defined over $K$, let $\Phi\colon X\lra X$ be an \'etale endomorphism, and let $x\in X(K)$.  
If $|\OO_\Phi(x)\cap f^{-1}(y)|<\infty$ for each $y\in Y(K)$, then there is a constant $N$ such that $$|\OO_\Phi(x)\cap f^{-1}(y)|<N$$ for each $y\in Y(K)$.
\end{theorem}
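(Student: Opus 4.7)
The plan is to leverage the $p$-adic analytic arc technique underlying the proof of the Dynamical Mordell--Lang conjecture for \'etale endomorphisms, combined with a uniformity property of Strassmann's theorem. After replacing $K$ by the finitely generated subfield of definition of $X$, $Y$, $f$, $\Phi$, and $x$, and then spreading out, one obtains a prime $p$ of good reduction, a positive integer $M$, and, for each residue class $i \in \{0,1,\dots,M-1\}$, a $p$-adic analytic map $\theta_i \colon \ZZ_p \to X(\CC_p)$ satisfying $\theta_i(n) = \Phi^{Mn+i}(x)$ for all $n \in \NN$. By construction, the image of $\theta_i$ is contained in the $p$-adic residue disk around $x_i := \Phi^i(x)$, and therefore lies inside any affine open neighborhood of $x_i$ in $X$.

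Next, fix an affine open $U_i \subseteq Y$ containing $f(x_i)$ and set $g_i := f \circ \theta_i \colon \ZZ_p \to U_i(\CC_p)$. If $g_i$ were constant with value $y_0$, then the infinite arithmetic progression $\{Mn + i : n \in \NN\}$ would lie in $S_{y_0}$, contradicting the hypothesis; hence each $g_i$ is non-constant. Choose a closed embedding $U_i \hookrightarrow \AA^{m_i}$ and a coordinate $\phi$ for which $h_i := \phi \circ g_i \colon \ZZ_p \to \CC_p$ is non-constant. Writing $h_i(t) = \sum_{n \ge 0} a_n t^n$ with $|a_n| \to 0$, Strassmann's theorem yields
\[
\#\{t \in \ZZ_p : h_i(t) = c\} \le n_0(h_i) := \max\{n \ge 1 : |a_n| = \max_{k \ge 1} |a_k|\}
\]
for every $c \in \CC_p$. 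The crucial observation is that replacing $h_i$ by $h_i - c$ alters only the constant term $a_0$, so the right-hand side is independent of $c$. Combined with the inclusion $g_i^{-1}(y) \subseteq h_i^{-1}(\phi(y))$ for $y \in U_i$ (and the triviality of $g_i^{-1}(y)$ for $y \in Y(K)\setminus U_i(K)$), this delivers a uniform bound $|g_i^{-1}(y)| \le n_0(h_i)$ over all $y \in Y(K)$. Summing over the $M$ residue classes and adding the finite contribution from any initial orbit terms not covered by the arcs produces the desired constant $N$.

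The main obstacle is the first step: producing the $p$-adic arcs $\theta_i$ when $K$ is an arbitrary field of characteristic zero rather than a number field. This will require the standard spreading-out argument---choose a finitely generated $\ZZ$-algebra $R \subseteq K$ over which all the data is defined, localize to remove the ramification divisor and other bad loci of $\Phi$ and $f$, and specialize at a suitable maximal ideal of $R$---together with the \'etale hypothesis, which ensures that the residue classes of the orbit modulo an appropriate period $M$ admit analytic parametrizations over $\ZZ_p$. Once these arcs are in place, the uniformity in $y$ is essentially automatic, since perturbing an analytic function by a constant leaves its Strassmann degree unchanged.
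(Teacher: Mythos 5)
Your proposal is correct and follows essentially the same route as the paper: spread out to a $\ZZ_p$-model, use the $p$-adic arc lemma for \'etale maps to parametrize the orbit along residue classes by functions in $\ZZ_p\langle z\rangle$, rule out the constant case via the finiteness hypothesis, and exploit the fact that Strassmann's bound for $h(z)-c$ is independent of $c$ because only the constant coefficient changes. The one cosmetic difference is that you place the image of each arc in a single affine chart of $Y$ (via the residue disk of $f(x_i)$), whereas the paper works with a finite piecewise-analytic description over an affine cover; both yield the same uniform bound.
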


Theorem~\ref{thm:uniform-bound-fibers} offers a uniform statement for the Dynamical Mordell--Lang Conjecture. Indeed, the Dynamical Mordell--Lang Conjecture (see \cite{GT-JNT, DML-book}) predicts the following: given a quasi-projective variety $X$ defined over a field $K$ of characteristic $0$, endowed with an endomorphism $\Phi$, for any point $x\in X(K)$ and any subvariety $V\subset X$, the set 
$$S(X,\Phi,V,x):=\{n\in\N\colon \Phi^n(x)\in V(K)\}$$ 
is a finite union of arithmetic progressions $\{ak+b\colon k\in\N\}$ for some suitable integers $a$ and $b$, where the case $a=0$ yields a singleton instead of an infinite arithmetic progression. 

In particular, assuming $x$ is not preperiodic, if $V\subset X$ contains no periodic positive dimensional subvariety intersecting the orbit of $x$, then the Dynamical Mordell--Lang Conjecture predicts that $V$ intersects the orbit of $x$ in finitely many points, see \cite[\S3.1.3]{DML-etale}. The Dynamical Mordell--Lang Conjecture is still open in its full generality, though several partial results are known; for a full account of the known results prior to 2016, see \cite{DML-book}. One important case for which the Dynamical Mordell--Lang Conjecture is known is the case of \'etale endomorphisms, see \cite{DML-etale}. Our Theorem~\ref{thm:uniform-bound-fibers} yields a uniform statement for the Dynamical Mordell--Lang Conjecture in the case of \'etale endomorphisms, as follows.

Let $X$ be a quasi-projective variety defined over a field $K$ of characteristic $0$, endowed with an endomorphism $\Phi$. Let $\{X_y\}_{y\in Y}$ be an algebraic family of subvarieties of $X$ parametrized by some quasi-projective variety $Y$. Let $x\in X(K)$ be a non-preperiodic point with the property that its orbit under $\Phi$ meets each subvariety $X_y$ in finitely many points, i.e., no subvariety $X_y$ contains a periodic positive dimensional subvariety intersecting $\OO_\Phi(x)$. Then Theorem~\ref{thm:uniform-bound-fibers} proves that there exists a uniform upper bound $N$ for the number of points from the orbit $\OO_\Phi(x)$ on the subvarieties $X_y$ as $y$ varies in $Y(K)$. We believe the same statement would hold more generally (for an arbitrary endomorphism), as stated in the following uniform version of the Dynamical Mordell--Lang Conjecture.

\begin{conjecture} (Uniform Dynamical Mordell-Lang Conjecture) 
\label{conj:uniform DML}
Let $f\colon X\lra Y$ be a morphism of quasi-projective varieties defined over a field $K$ of characteristic $0$, let $\Phi\colon X\lra X$ be an endomorphism defined over $K$ and let $x\in X(K)$.  If $|\OO_\Phi(x)\cap f^{-1}(y)|<\infty$ for all $y\in Y(K)$, then there is a constant $N$ such that $$|\OO_\Phi(x)\cap f^{-1}(y)|<N$$ for all $y\in Y(K)$.
\end{conjecture}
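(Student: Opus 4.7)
To attack Conjecture~\ref{conj:uniform DML} my plan is to try to emulate the strategy used in Theorem~\ref{thm:uniform-bound-fibers}. In the \'etale setting one spreads out to reduce to $K=\CC_p$ for a prime of good reduction, invokes the Bell--Ghioca--Tucker parametrization to obtain a positive integer $M$, a shift $n_0$, and $p$-adic analytic arcs $\theta_r\colon\ZZ_p\to X(\CC_p)$ with $\theta_r(k)=\Phi^{n_0+Mk+r}(x)$ for $r=0,\dots,M-1$ and $k\in\NN$, forms the composites $g_r=f\circ\theta_r$, and bounds the number of preimages of any $y\in Y(K)$ under $g_r$. The hypothesis that each $|S_y|$ is finite forbids $g_r$ from being constant on any compact clopen ball in the cover of $\ZZ_p$ obtained by pulling back a finite clopen cover of $\PP^N(\CC_p)\supset Y(\CC_p)$ (for instance by the patches $\{|x_i|=\max_j|x_j|\}$ after fixing an embedding $Y\hookrightarrow\PP^N$); on each ball Strassman's theorem applied to the resulting tuple of power series gives a bound depending only on their Newton polygons, hence independent of $y$. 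Summing over $r$, charts, balls, and the finite initial segment $n<n_0$ then yields the uniform constant $N$.

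For arbitrary $\Phi$ the Strassman step and the reduction to finitely many analytic patches carry over verbatim---the only missing input is the analytic arcs $\theta_r$, and producing them is the main obstacle. Bell--Ghioca--Tucker's construction relies on the $p$-adic inverse function theorem applied to a suitable iterate $\Phi^M$ that contracts each residue disc around an orbit point, and \'etaleness is used there to guarantee that $\Phi^M$ is a $p$-adic isomorphism on those discs. At ramified orbit points this input is unavailable, and producing analogous analytic interpolations of orbits for general endomorphisms would already imply the (still open) non-uniform Dynamical Mordell--Lang Conjecture.

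A reasonable partial reduction worth trying first is the following: let $U\subset X$ denote the open \'etale locus of $\Phi$, and suppose one can find a forward-invariant open subscheme $V\subset U$ containing some tail $\{\Phi^n(x):n\ge n_1\}$ of the orbit, so that $\Phi|_V\colon V\to V$ is \'etale. Then Theorem~\ref{thm:uniform-bound-fibers} applied to $(V,\Phi|_V,f|_V)$ supplies a uniform bound for the tail, and the terms with $n<n_1$ contribute at most $n_1$ additional elements to any $|S_y|$. The genuinely hard case is when the orbit meets the ramification locus of $\Phi$ infinitely often, so no such $V$ exists; attacking it will probably require new tools, for instance a resolution $\widetilde X\to X$ on which the lifted self-map becomes \'etale along the lifted orbit, a semi-conjugacy to an \'etale dynamical system, or a combination of the $p$-adic analysis with the height-growth mechanism underlying the paper's second main result.
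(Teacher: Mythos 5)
The statement you were asked about is stated in the paper as a \emph{conjecture} (the Uniform Dynamical Mordell--Lang Conjecture), and the paper offers no proof of it in general; it proves only the \'etale case (Theorem~\ref{thm:uniform-bound-fibers}) and remarks that, after passing to an iterate and replacing $X$ by $\Phi^\ell(X)$, this also covers unramified endomorphisms of smooth varieties. Your proposal correctly recognizes this: you do not claim a proof, you reproduce essentially the paper's own strategy for the \'etale case (spreading out to a $p$-adic model, the Bell--Ghioca--Tucker $p$-adic arc lemma to interpolate the orbit by convergent power series, a finite affine cover of $Y$ turning $f$ restricted to the orbit into finitely many analytic tuples, and Strassman's theorem to bound the fibers uniformly via the Gauss-norm data of the non-constant coordinate series), and you correctly identify the genuine obstruction to the general case, namely that the analytic interpolation of the orbit is only available when a suitable iterate of $\Phi$ acts invertibly (analytically) on the residue discs meeting the orbit, which is exactly where \'etaleness enters. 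Your ``partial reduction'' via a forward-invariant open subscheme of the \'etale locus containing a tail of the orbit is in the same spirit as the paper's own remark about unramified endomorphisms. One minor caution: in the \'etale case the hypothesis that each fiber meets the orbit finitely is used (as in Lemma~\ref{l:uniform-bound-piecewise-analytic}) to rule out a coordinate tuple being entirely constant on an arc, not merely ``constant on some clopen ball''; a series non-constant on $\ZZ_p$ can still be constant on a residue ball, so the bound must come from the Gauss-norm/Weierstrass degree of the non-constant series on the whole arc, as the paper does. Since the general conjecture is open, there is no proof to compare against beyond this, and your assessment of where the difficulty lies is accurate.
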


Theorem~\ref{thm:uniform-bound-fibers} answers Conjecture~\ref{conj:uniform DML} in the case of \'etale endomorphisms. Furthermore, at the expense of replacing $\Phi$ by an iterate and also, replacing $X$ by $\Phi^\ell(X)$ for a suitable $\ell$, we see that Theorem~\ref{thm:uniform-bound-fibers} yields a positive answer for Conjecture~\ref{conj:uniform DML} for unramified endomorphisms $\Phi$ of a smooth variety $X$; thus, the conclusion of Theorem~\ref{thm:uniform-bound-fibers} applies to any endomorphism of a semiabelian variety $X$ defined over a field of characteristic $0$. 

One of the key lemmas from the proof of Theorem~\ref{thm:uniform-bound-fibers} (see Lemma~\ref{l:uniform-bound-piecewise-analytic}) provides the motivation for our next result (which is also motivated in its own right by the Dynamical Mordell--Lang Conjecture, as we will explain after its statement). 

\begin{theorem}
\label{thm:gaps}
Let $X$ be a  quasi-projective variety defined over $\Qbar$, let $\Phi\colon X\lra X$ be an endomorphism, and let $f\colon X\dra \bP^1$ be a rational function. Then for each $x\in X(\Qbar)$ with the property that the set $f(\OO_\Phi(x))$ is infinite, we have
\[
\limsup_{n\to\infty} \frac{h(f(\Phi^n(x)))}{\log(n)}>0,
\]
where $h(\cdot)$ is the logarithmic Weil height for algebraic numbers.
\end{theorem}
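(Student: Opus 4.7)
My plan is to argue by contradiction, assuming $\limsup_{n\to\infty} h(f(\Phi^n(x)))/\log n = 0$ and aiming to conclude that $f(\OO_\Phi(x))$ must be finite. Fix a number field $K$ over which $X$, $\Phi$, $f$, and $x$ are all defined, and set $d = [K:\mathbb{Q}]$. Schanuel's theorem gives $|\{\alpha \in \mathbb{P}^1(K) : h(\alpha) \leq T\}| = O(e^{2dT})$. The contradiction hypothesis provides $h(f(\Phi^n(x))) \leq \epsilon \log n$ for every $\epsilon > 0$ and $n$ sufficiently large, so $D(N) := |\{f(\Phi^n(x)) : 0 \leq n \leq N\}|$ satisfies $D(N) = O(N^{2d\epsilon})$ for every $\epsilon > 0$; that is, $D(N)$ grows sub-polynomially in $N$.

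To derive a contradiction from the sub-polynomial growth, I would invoke a $p$-adic analytic interpolation of the orbit, in the spirit of Lemma~\ref{l:uniform-bound-piecewise-analytic}. At a suitable prime $p$, one partitions $\mathbb{N}$ into finitely many arithmetic progressions $\{ak + b : k \in \mathbb{N}\}$, on each of which $k \mapsto \Phi^{ak+b}(x)$ is interpolated by a $p$-adic analytic arc $\gamma_b \colon \mathbb{Z}_p \to X(\mathbb{C}_p)$; then $F_b := f \circ \gamma_b$ is $p$-adic meromorphic. If every $F_b$ is constant, then $|f(\OO_\Phi(x))| \leq a$, contradicting infiniteness. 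Otherwise, for each non-constant $F_b$, Weierstrass preparation yields $M_b$ with $|F_b^{-1}(y) \cap \mathbb{Z}_p| \leq M_b$ uniformly in $y$. Partitioning $n \leq N$ by residue class modulo $a$ and applying these bounds (combined with the fact that the constant $F_b$ contribute at most $a$ values to $f(\OO_\Phi(x))$) yields $D(N) \geq cN$ for some $c > 0$ and all large $N$, a linear lower bound contradicting sub-polynomiality.

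The main obstacle is justifying the $p$-adic analytic interpolation for an arbitrary endomorphism, since the Bell--Ghioca--Tucker interpolation underlying Theorem~\ref{thm:uniform-bound-fibers} requires $\Phi$ étale. I would expect to handle this by first applying the reduction indicated in the excerpt---passing to an iterate of $\Phi$ and replacing $X$ by $\Phi^\ell(X)$ for a suitable $\ell$---to reach a situation where $\Phi$ is unramified on a Zariski-dense open through which a tail of the orbit passes, then invoking the appropriate analytic interpolation there. Verifying that these reductions preserve the infinitude of $f(\OO_\Phi(x))$ and correctly matching the $p$-adic fiber bounds to the archimedean height count is the technical heart of the argument; once that is in place, the Weierstrass--pigeonhole step above is essentially routine.
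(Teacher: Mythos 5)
The overall shape of your counting argument (a Schanuel-type count of points of bounded height played against a $p$-adic bound on how often a single value can recur) matches the paper's strategy, but the step you yourself flag as the ``main obstacle'' is a genuine gap, and your proposed fix does not close it. Exact $p$-adic interpolation of $k\mapsto\Phi^{ak+b}(x)$ by analytic arcs is simply not available for an arbitrary endomorphism: it requires $\Phi$ to be analytically invertible (e.g.\ \'etale) near the relevant fixed residue point, and passing to an iterate or replacing $X$ by $\Phi^\ell(X)$ does not make a ramified map unramified (consider $\Phi(x)=x^2$ on $\AA^1$: every iterate is ramified at $0$, and the orbit of a generic point has heights growing like $2^n$, which no single analytic function of $n$ can interpolate). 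Indeed, if such interpolation were available in general, the full Dynamical Mordell--Lang conjecture would follow, and it is open.

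The paper circumvents this with a strictly weaker tool: after arranging that $\bar x$ is fixed mod $p$, it invokes \cite[Theorem 11.11.1.1]{DML-book} to produce a $p$-adic analytic $G$ satisfying only the \emph{approximation} $\|\Phi^n(x)-G(n)\|\le p^{-n}$, which holds for arbitrary endomorphisms. This costs two things your sketch does not account for. First, the fiber bound is no longer a uniform constant: $f(\Phi^n(x))=t$ only gives $|H(n)-t|_p\le p^{-n}$, which forces $n$ to lie $p$-adically within about $p^{-n/L}$ of one of boundedly many roots of $H-t$; two such $n$ must then differ archimedeanly by roughly $p^{\min(n)/L}$, so one only gets $\#\{n\le M: f(\Phi^n(x))=t\}=O(\log M)$. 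The counting must therefore be run on exponential scales against the $c_1^N$ Schanuel bound (this is why the conclusion is merely $\limsup>0$ and why the paper needs the return-rate property \ref{constant-rate-of-return::return} of Lemma~\ref{l:constant-rate-of-return}); your claimed linear lower bound $D(N)\ge cN$ is not obtainable this way. Second, the case where the approximating series $H$ is constant does \emph{not} force $f(\Phi^n(x))$ to be eventually constant --- it only says all values are within $p^{-n}$ of a fixed $t_0$ --- and the paper needs a separate argument there (Lemma~\ref{lem:approx_2}): distinct consecutive values differ by a $p$-adically tiny nonzero number and hence have large height, unless $S$ has enormous gaps, which would contradict the Banach-density-zero property. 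Your reduction from ``$\limsup=0$'' to sub-polynomial growth of the value set is fine, but without these two repairs the proof does not go through.
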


Note that if $X=\mathbb{A}^1$, the map $\Phi\colon X\lra X$ is given by $\Phi(x)=x+1$, and $f\colon X\hookrightarrow \bP^1$ is the usual embedding, then $h(f(\Phi^n(0)))=\log(n)$ for $n\in
\N$. This example shows that Theorem~\ref{thm:gaps} is, in some sense, the best possible. However, we believe that this gap result should hold more generally for rational self-maps.  Specifically, we make the following conjecture.

\begin{conjecture} (Height Gap Conjecture)
\label{conj:gaps}
Let $X$ be a  quasi-projective variety defined over $\Qbar$, let $\Phi\colon X\dra X$ be a rational self-map, and let $f\colon X\dra \bP^1$ be a rational function. Then for $x\in X(\Qbar)$ with the property that $\Phi^n(x)$ avoids the indeterminacy locus of $\Phi$ for every $n\ge 0$, either $f(\OO_\Phi(x))$ is finite or $$\limsup_{n\to\infty} \frac{h(f(\Phi^n(x)))}{\log(n)}>0.$$
\end{conjecture}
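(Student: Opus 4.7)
My plan is to reduce the conjecture, via resolution of indeterminacy along the orbit, to the morphism case handled by Theorem~\ref{thm:gaps}. First, I would pass to a projective completion $\bar X$ of $X$ and seek a birational modification $\pi\colon \widetilde X\to \bar X$ together with a lift $\widetilde \Phi\colon \widetilde X\dra \widetilde X$, a lift $\tilde x \in \widetilde X(\Qbar)$ of $x$, and an extension $\tilde f := f\circ \pi$ so that $\widetilde \Phi$ is a morphism in a neighborhood of $\OO_{\widetilde\Phi}(\tilde x)$. The hypothesis that $\Phi^n(x)$ avoids the indeterminacy locus of $\Phi$ for all $n \ge 0$ makes this plausible: indeterminacy need only be resolved at a countable sequence of points, which can be accomplished by an iterated sequence of blowups. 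Once such a lift is constructed, the functoriality of heights under birational morphisms gives $h(f(\Phi^n(x))) = h(\tilde f(\widetilde\Phi^n(\tilde x))) + O(1)$, and Theorem~\ref{thm:gaps} applied to the quadruple $(\widetilde X, \widetilde\Phi, \tilde f, \tilde x)$ yields the desired lower bound on $\limsup h/\log n$.

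The main obstacle, which I expect to be the crux, is the construction of $\widetilde\Phi$ as a genuine endomorphism of $\widetilde X$ rather than merely a rational map defined at the orbit points. Even if one resolves the indeterminacy of $\Phi$ itself, iterating the resulting map typically introduces new indeterminacy (algebraic stability is the exception, not the rule, among rational self-maps). To address this, I would proceed iteratively: construct a tower $\widetilde X_k \to \widetilde X_{k-1} \to \cdots \to \bar X$ so that the first $k$ orbit points lift to $\widetilde X_k$ and $\widetilde\Phi$ is regular at each of them, then pass either to an inverse limit or to a finite stage that suffices for the estimate — this depending on whether one can extract a \emph{uniform} version of Theorem~\ref{thm:gaps} whose conclusion only depends on finitely many iterates at a time.

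If the above strategy fails for structural reasons, the fallback plan is a direct height-theoretic approach. Write $y_n := f(\Phi^n(x))$ and argue by contradiction: if $h(y_n) = o(\log n)$, then by Schanuel-type counting of algebraic numbers of bounded degree and height, the number of distinct values in $\{y_1, \ldots, y_N\}$ is at most $N^{o(1)}$, so by pigeonhole some value $y^* \in \bP^1(\Qbar)$ is attained along the orbit with upper density $1$, forcing $\OO_\Phi(x)\cap f^{-1}(y^*)$ to be infinite. A suitable rational-map analogue of the Dynamical Mordell--Lang Conjecture would then contradict the assumption that $f(\OO_\Phi(x))$ is infinite. The principal difficulty in this second approach is that Dynamical Mordell--Lang for rational self-maps is itself largely open; a useful intermediate step would be to establish the conjecture under the hypothesis $\alpha_\Phi(x) > 1$ on the arithmetic degree, where $h(\Phi^n(x))$ grows exponentially and generic bounds for rational functions $f$ transfer this growth to $h(f(\Phi^n(x)))$, so that the remaining case $\alpha_\Phi(x) = 1$ is where the delicate $p$-adic linearization of $\Phi$ near cluster points of the orbit — in the spirit of the Skolem--Mahler--Lech method used in the \'etale case — becomes essential.
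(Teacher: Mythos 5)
This statement is Conjecture~\ref{conj:gaps}; the paper does not prove it and explicitly leaves it open, establishing only the endomorphism case as Theorem~\ref{thm:gaps}. Your proposal is therefore not being measured against an existing proof, and as written it is a research plan rather than a proof: both of your routes terminate in steps you yourself flag as unresolved, and those steps are exactly where the difficulty lives.

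Concretely, the first route fails at the point you identify as the crux. A rational self-map cannot in general be turned into a morphism by a birational modification: resolving the indeterminacy of $\Phi$ produces a map $\widetilde X\dra \widetilde X$ whose iterates acquire new indeterminacy, the tower $\widetilde X_k\to\cdots\to \bar X$ does not stabilize, and its inverse limit is not a quasi-projective variety, so Theorem~\ref{thm:gaps} cannot be applied to it. Nor is there a ``finite-stage'' or ``uniform'' version of Theorem~\ref{thm:gaps} to fall back on: its proof is intrinsically global in $n$ --- it spreads out a fixed model over $\ZZ_p$, uses that $\Phi$ is an honest endomorphism of that model to produce a single $p$-adic analytic function $G$ approximating the entire orbit (equation~\eqref{eq:p-adic approximation}), and then runs a counting argument over all $n\le \kappa c_4^{N_\ell}$ at once. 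Truncating at finitely many iterates destroys the argument. The second route is likewise blocked: it invokes a Dynamical Mordell--Lang statement for rational self-maps, which is open (and, as the paper notes, would itself follow from the even stronger Conjecture~\ref{conj:gaps-dense}); moreover, even granting it, the conclusion that some fiber $f^{-1}(y^*)$ meets the orbit infinitely often does not by itself contradict the hypothesis that $f(\OO_\Phi(x))$ is infinite --- one would still need an argument in the spirit of Lemma~\ref{lem:approx_2}, where the paper uses the $p$-adic parametrization together with a Banach-density argument to extract large heights from the \emph{other} values of $f$ along the orbit. In short, both branches reduce the conjecture to statements at least as hard as the conjecture itself.
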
 

Theorem \ref{thm:gaps} proves this conjecture in the case of endomorphisms.  Many interesting number theoretic questions fall under the umbrella of the gap conjecture stated above.  As an example, we recall that a power series $F(x)\in \Qbar[[x]]$ is called $D$-\emph{finite} if it is the solution to a non-trivial homogeneous linear differential equation with rational function coefficients.  It is known that if $\sum_{n\geq0} a(n) x^n$ is a $D$-finite power series over a field of characteristic zero, then there is some $d\ge 2$, a rational endomorphism $\Phi\colon\mathbb{P}^d\dra \mathbb{P}^d$, a point $c\in \mathbb{P}^d$ and a rational map $f\colon \mathbb{P}^d\dra \mathbb{P}^1$ such that $a(n)=f\circ \Phi^n(c)$ for $n\ge 0$, see \cite[Section 3.2.1]{DML-book}. Heights of coefficients of $D$-finite power series have been studied independently, notably by van der Poorten and Shparlinski \cite{vdPS}, who showed a gap result holds in this context that is somewhat weaker than what is predicted by our height gap conjecture above; specifically, they showed that if $\sum_{n\geq0} a(n)x^n\in \Qbar[[x]]$ is $D$-finite and $$\limsup_{n\to\infty} \frac{a(n)}{\log\log(n)}=0,$$ then the sequence $\{a(n)\}$ is eventually periodic.  This was improved recently \cite{BNZ}, where it is shown that if $\limsup_{n\to\infty} \frac{a(n)}{\log(n)}=0$, then the sequence $\{a(n)\}$ is eventually periodic. We see this then gives additional underpinning to Conjecture \ref{conj:gaps}. Furthermore, with the notation as in Theorem~\ref{thm:gaps}, assume now that 
\begin{equation}
\label{limsup is zero}
\limsup_{n\to\infty} \frac{h\!\left(f(\Phi^n(x))\right)}{\log(n)}=0.
\end{equation}
Then Theorem~\ref{thm:gaps} asserts that Equation~\eqref{limsup is zero} yields that $f(\OO_\Phi(x))$ is finite. We claim that actually this means that the set $\{f(\Phi^n(x))\}_{n\in\mathbb{N}}$ is eventually periodic. Indeed, for each $m\in\mathbb{N}$, we let $Z_m$ be the Zariski closure of $\{\Phi^n(x)\}_{n\ge m}$. Then $Z_{m+1}\subseteq Z_m$ for each $m$ and thus, by the Noetherian property, we get that there exists  some $M\in\mathbb{N}$ such that $Z_{m}=Z_M$ for each $m\ge M$. So, there exists a suitable positive integer $\ell$ such that $\Phi^\ell$ induces an endomorphism of each irreducible component of $Z_M$; moreover, each irreducible component of $Z_M$ contains a Zariski dense set of points from the orbit of $x$. Furthermore, because $f(\OO_\Phi(x))$ is a finite set, we get that  $f$ must be constant on each irreducible component of $Z_M$ and thus, in particular, $f$ is constant on each orbit $\OO_{\Phi^\ell}(\Phi^r(x))$ for $r$ sufficiently large. Hence, Theorem~\ref{thm:gaps} actually yields that once Equation~\eqref{limsup is zero} holds, then $\{f(\Phi^n(x))\}_{n\in\mathbb{N}}$ is eventually periodic.  
 
It is important to note that one cannot replace $\limsup$ with $\liminf$ in Conjecture \ref{conj:gaps}, even in the case of endomorphisms.  To see this, consider the map $\Phi\colon\mathbb{A}^3\to\mathbb{A}^3$ given by $(x,y,z)\mapsto (yz, xz, z+1)$.
Then, letting $c=(0,1,1)$, it is easily shown by induction that for $n\ge 0$, we have 
\[
\Phi^{2n}(c)=(0, (2n)!, 2n+1)\quad\quad\textrm{and}\quad\quad\Phi^{2n+1}(c)=((2n+1)!,0,2n+2).
\]
Consequently, if $f\colon\mathbb{A}^3\to \mathbb{A}^1$ is given by $f(x,y,z)=x+1$, then we see that $f(\Phi^{2n}(c))=1$ and $f(\Phi^{2n+1}(c))=(2n+1)!+1$ for every $n\ge 0$, and so
\[
\liminf_{n\to\infty} \frac{h(f(\Phi^{n}(c)))}{\log(n)}=0, \quad\quad\textrm{while}\quad\quad \limsup_{n\to\infty} \frac{h(f(\Phi^n(c)))}{\log(n)}=\infty.
\]
Despite the fact that the conjecture does not hold when one replaces $\limsup$ with $\liminf$, we believe the following variant of Conjecture \ref{conj:gaps} holds:

\begin{conjecture}
\label{conj:gaps-dense}
Let $X$ be an irreducible quasi-projective variety defined over $\Qbar$, let $\Phi\colon X\dra X$ be a rational self-map, and let $f\colon X\dra \bP^1$ be a non-constant rational function. Let $x\in X(\Qbar)$ with the property that $\Phi^n(x)$ avoids the indeterminacy locus of $\Phi$ for every $n\ge 0$, and further suppose that $\OO_\Phi(x)$ is Zariski dense in $X$. Then $$\liminf_{n\to\infty} \frac{h(f(\Phi^n(x)))}{\log(n)}>0.$$
\end{conjecture}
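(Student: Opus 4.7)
The plan is to combine Theorem \ref{thm:gaps} with a geometric consequence of the Zariski density hypothesis that rules out the ``bouncing'' behavior underlying the counterexample preceding the conjecture statement. The key input, which I would prove first, is the following lemma: \emph{if $\OO_\Phi(x)$ is Zariski dense in irreducible $X$ and $V \subsetneq X$ is a proper closed subvariety, then the set $S_V := \{n \in \N : \Phi^n(x) \in V\}$ is finite.} Granting the Dynamical Mordell--Lang Conjecture (proved in the \'etale case, and taken as input in the general rational setting), $S_V$ is a finite union of arithmetic progressions; if it were infinite, it would contain a full arithmetic progression $\{am+b : m \geq 0\}$. Then for each residue $i \in \{0, \dots, a-1\}$, all but finitely many iterates $\Phi^{am+i}(x)$ lie in $\Phi^{i-b}(V)$ (indices read modulo $a$), so $\OO_\Phi(x)$ is contained, outside a finite set, in $\bigcup_{j=0}^{a-1} \overline{\Phi^j(V)}$. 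Since each $\overline{\Phi^j(V)}$ has dimension at most $\dim V < \dim X$, this finite union is a proper closed subvariety of the irreducible $X$, contradicting Zariski density of $\OO_\Phi(x)$. As a first consequence, every fibre $f^{-1}(y)$ (a proper subvariety because $f$ is non-constant) is hit only finitely often, so $f(\OO_\Phi(x))$ is infinite.

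Armed with the lemma, suppose for contradiction $\liminf_{n \to \infty} h(f(\Phi^n(x)))/\log(n) = 0$, and choose a subsequence $\{n_k\}$ along which $h(y_k) = o(\log n_k)$, where $y_k := f(\Phi^{n_k}(x)) \in \PP^1(K)$ for a fixed number field $K$ of definition. If $\{h(y_k)\}$ admits a bounded subsequence, Northcott's finiteness theorem produces a further subsequence on which $y_k$ is constant, equal to some $y_0 \in \PP^1(K)$; the corresponding iterates $\Phi^{n_{k_j}}(x)$ then lie in the proper subvariety $f^{-1}(y_0)$, contradicting the key lemma. The remaining case is $h(y_k) \to \infty$ while $h(y_k) = o(\log n_k)$.

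To handle this unbounded-height case, I would work with the line-bundle height $h_{L_f}$ attached to $L_f := f^* \OO_{\PP^1}(1)$ on a resolution of the indeterminacy of $f$, so that $h(f(P)) = h_{L_f}(P) + O(1)$ whenever $f$ is defined at $P$, and then compare $h_{L_f}$ to an ample height $h_A$ on a projective compactification of $X$. The envisioned tool is the Kawaguchi--Silverman machinery of arithmetic degrees, which predicts (and in many cases establishes) that $h_A(\Phi^n(x))$ grows like $\lambda_1(\Phi)^n$ whenever the orbit is Zariski dense and $\lambda_1(\Phi) > 1$; one then seeks to argue that orbit points whose $h_{L_f}$ is much smaller than $h_A$ must concentrate on a proper subvariety of $X$, again contradicting the key lemma. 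The principal obstacle is the case $\lambda_1(\Phi) = 1$, where $h_A(\Phi^n(x))$ itself grows only polynomially (or slower) and there is no strong separation to exploit between $h_{L_f}$ and $h_A$. Here genuinely new arithmetic input seems required, and this is the main reason Conjecture \ref{conj:gaps-dense} remains open even in light of Theorem \ref{thm:gaps}.
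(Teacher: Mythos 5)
This statement is Conjecture~\ref{conj:gaps-dense}: the paper states it as an open conjecture, offers no proof, and in fact records only the reverse implication (that the conjecture would imply the Dynamical Mordell--Lang conjecture for rational self-maps over number fields). Your proposal is likewise not a proof, as you concede in your final sentence, so the honest verdict is that there is a genuine gap --- indeed the entire quantitative content of the statement is missing. Concretely: (1) Your ``key lemma'' is conditional on DML for arbitrary rational self-maps, which is open; since the paper's stated motivation is that Conjecture~\ref{conj:gaps-dense} \emph{implies} DML, assuming DML as input inverts the intended logical direction (it is not circular, but it reduces the result to a conditional statement of much weaker interest). Granting DML, the lemma's argument is essentially right, though the expression $\Phi^{i-b}(V)$ with ``indices read modulo $a$'' should be replaced by the forward statement $\Phi^{am+b+j}(x)\in\Phi^{j}(V)$ for $0\le j<a$, whose closures still give a proper closed subset by the dimension bound. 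The Northcott step then correctly disposes of bounded-height subsequences. (2) The remaining case, $h(y_k)\to\infty$ with $h(y_k)=o(\log n_k)$, is the actual theorem to be proved, and your sketch does not touch it: the Kawaguchi--Silverman heuristic gives nothing when $\lambda_1(\Phi)=1$ (which includes $\Phi(x)=x+1$ on $\AA^1$, the paper's extremal example showing the bound is sharp), and even when $\lambda_1(\Phi)>1$ the claim that orbit points with $h_{L_f}$ much smaller than an ample height must concentrate on a proper subvariety is asserted without argument. Note also that the method of Theorem~\ref{thm:gaps} --- Strassman/Weierstrass bounds on how often $f(\Phi^n(x))$ can take a fixed value up to time $M$, played against Schanuel's count of low-height points --- is intrinsically a $\limsup$ argument: it produces one large-height iterate per long window, which is fully compatible with $\liminf=0$, so new ideas are needed to exploit Zariski density and upgrade to a $\liminf$.
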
 


We point out that, if true, this would be a powerful result and would imply the Dynamical Mordell--Lang conjecture for rational self-maps when we work over a number field.  To see this, let $Z$ be a quasi-projective variety defined over $\Qbar$, let $\Phi\colon Z\dra Z$ be a rational self-map, $Y$ be a subvariety of $Z$, and suppose that the orbit of $x\in Z(\QQbar)$ avoids the indeterminacy locus of $\Phi$. As before, denote by $Z_n$ the Zariski closure of $\{\Phi^j(x) \colon j\ge n\}$. Since $Z$ is a Noetherian topological space, there is some $m$ such that $Z_n=Z_m$ for every $n\ge m$. Letting $X=Z_m$, and replacing $Y$ with $Y\cap X$, it suffices to show that the conclusion to the Dynamical Mordell--Lang conjecture holds for the data $(X,\Phi, x, Y)$. We let $X_1,\ldots,X_d$ denote the irreducible components of $X$ and let $Y_i=Y\cap X_i$. Since $\Phi|_X$ is a dominant self-map, it permutes the components $X_i$, so there is some $b$ such that $\Phi^b(X_i)\subset X_i$ for each $i$.  Then if we let $x_1,\ldots ,x_d$ be elements in the orbit of $x$ with the property that $x_i\in X_i$, then it suffices to show that the conclusion to the statement of the Dynamical Mordell--Lang conjecture holds for the data $(X_i, \Phi^b, x_i, Y_i)$ for $i=1,\ldots,d$.  Then by construction, the orbit of $x_i$ under $\Phi^b$ is Zariski dense. We prove that either $\OO_{\Phi^b}(x_i)\subset Y_i$ or that  $\O_{\Phi^b}(x_i)$ intersects $Y_i$ finitely many times. If $Y_i=X_i$ or $Y_i=\emptyset$ then the result is immediate; thus we may assume without loss of generality that $Y_i$ is a non-empty proper subvariety of $X_i$. We pick a non-constant morphism $f_i\colon X_i\lra \mathbb{P}^1$ such that $f_i(Y_i)=1$. If $\Phi^{bn}(x_i)\in Y_i$, then $h(f(\Phi^{bn}(x_i)))=0$. Conjecture \ref{conj:gaps-dense} implies that this can only happen finitely many times, and so $\{n\colon \Phi^{bn}(x_i)\in Y_i\}$ is finite.

\section{Proof of our main results}
We recall the following definitions. The ring of strictly convergent power series $\mathbb{Q}_p\langle z\rangle$ is the collection of elements $P(z):=a_0+a_1 z+a_2 z^2 + \cdots \in \mathbb{Q}_p[[z]]$ such that $|a_n|_p\to 0$ as $n\to \infty$ and which thus consequently converge uniformly on $\mathbb{Z}_p$. The \emph{Gauss norm} is given by
$
|P(z)|_{\rm Gauss}:=\max_{n\geq0} |a_n|_p.
$ 
The ring $\ZZ_p\<z\>\subset\QQ_p\<z\>$ is the set of $P(z)$ with $|P(z)|_{\rm Gauss}\leq1$, i.e.~the set of $P$ with $a_i\in\ZZ_p$.

\begin{proof}[Proof of Theorem~\ref{thm:uniform-bound-fibers}.]
Clearly, we may reduce immediately (at the expense of replacing $\Phi$ by an iterate of it) to the case $X$ and $Y$ are irreducible.

A standard spreading out argument (similar to the one employed in the proof of \cite[Theorem~4.1]{DML-etale}) allows us to choose a model of $X$, $Y$, $f$, $\Phi$, and $x$ over an open subset $U\subseteq\spec R$, where $R$ is an integral domain which is a finitely generated $\ZZ$-algebra. In other words, $K$ is a field extension of the fraction field of $R$, we can find a map $\X\lra\Y$ over $U$, a section $U\lra\X$, and an \'etale endomorphism $\X\lra\X$ over $U$ which base change over $K$ to be $f\colon X\lra Y$, $x\colon\spec K\lra X$, and $\Phi\colon X\lra X$, respectively. After replacing $U$ by a possibly smaller open subset, we can assume $U=\spec R[g^{-1}]$ for some $g\in R$. Since $R[g^{-1}]$ is a finitely generated $\ZZ$-algebra, it is of the form $\ZZ[u_1,\dots,u_r]$. Applying \cite[Lemma 3.1]{generalized-SML}, we can find a prime $p\geq 5$ and an embedding $R[g^{-1}]$ into $\QQ_p$ which maps the $u_i$ into $\ZZ_p$. Base changing by the resulting map $\spec\ZZ_p\lra U$, we can assume $U=\spec\ZZ_p$. We will abusively continue to denote the map $\X\lra\Y$ by $f$, the \'etale endomorphism $\X\lra\X$ by $\Phi$, and the section $\spec\ZZ_p=U\lra\X$ by $x$. We let $\overline{\X}=\X\times_{\ZZ_p}\FF_p$, let $\overline{\Phi}\colon\overline{\X}\lra\overline{\X}$ be the reduction of $\Phi$, and let $\overline{x}\in\X(\FF_p)$ be the reduction of $x\in\X(\ZZ_p)$.

Notice that if $f(\Phi^n(x))=y$, then since $x$ extends to a $\ZZ_p$-point of $\X$, necessarily $y\in Y(K)$ extends to a $\ZZ_p$-point of $\Y$ as well. In particular, it suffices to give a uniform bound on the sets $\{n:f(\Phi^n(x))=y\}$ as $y$ varies through the elements $\Y(\ZZ_p)$.

To prove Theorem \ref{thm:uniform-bound-fibers}, we may replace $x$ by $\Phi^\ell(x)$ for some $\ell\in\NN$. Since $|\X(\FF_p)|<\infty$, we can therefore assume $\overline{x}$ is $\overline{\Phi}$-periodic, say of period $D$. It suffices to show that for each $1\leq i<D$, there is a uniform bound for $|\O_{\Phi^D(\Phi^i(x))}\cap f^{-1}(y)|$. 
So, we can replace $\Phi$ by $\Phi^D$, and hence assume $\overline{\Phi}(\overline{x})=\overline{x}$. Applying the $p$-adic Arc Lemma (see e.g., Remark 2.3 and Theorem 3.3 of \cite{DML-etale}) we can assume there are $p$-adic analytic functions $\phi_1,\dots,\phi_d\in\ZZ_p\<z\>$ such that in a $p$-adic analytic neighborhood, we have $$\Phi^n(x)=(\phi_1(n),\dots,\phi_d(n))\in\ZZ_p^d;$$ more precisely, letting $\phi(z):=(\phi_1(z),\dots,\phi_d(z))$, if $B\subset\X(\ZZ_p)$ is the $p$-adic ball of points whose reduction mod $p$ is $\overline{x}$, then there is an analytic bijection $\iota\colon B\lra\ZZ_p^d$, such that $\iota\!\left(\Phi^n(x)\right)=\phi(n)$.

Next, fix an embedding $\Y\subset\PP^r_{\ZZ_p}$, let $\{V_i\}_i$ be an open affine cover of $\Y$, and for each $i$, let $\{U_{ij}\}_j$ be an open affine cover of $f^{-1}(V_i)$. We can further assume that each $V_i$ is contained in one of the coordinate spaces $\AA^r_{\ZZ_p} \subset \PP^r_{\ZZ_p}$. Since $\X$ and $\Y$ are quasi-compact, we can assume the $\{U_{ij}\}_{i,j}$ and $\{V_i\}_i$ are finite covers. Then we can view $f|_{U_{ij}}\colon U_{ij} \lra V_i \subseteq \AA^r_{\ZZ_p}$ as a tuple of polynomials $(p_{ij0}, \dots, p_{ijr})$. Letting $P_{ijk}(z)=p_{ijk}\iota^{-1}\phi(z)$, we see $f|_{\O_{\Phi}(x)}$ is given by the following piecewise analytic function: $$f(\Phi^n(x))=(P_{ij0}(n), \dots, P_{ijr}(n))$$ whenever $\Phi^n(x)\in U_{ij}$.


It therefore suffices to prove that for each $i,j$, there exists $N_{ij}$ such that for all $(y_1,\dots,y_r)\in V_i(\ZZ_p)\subseteq\AA^r(\ZZ_p)$, the number of simultaneous roots of $P_{ijk}(z)-y_k$ (for $k=1,\dots,r$) is bounded by $N_{ij}$. In other words, we have reduced to proving the lemma below, where $S=\{n:\Phi^n(x)\in U_{ij}\}$ and $V=V_i(\ZZ_p)$.

\begin{lemma}
\label{l:uniform-bound-piecewise-analytic}
Let $r$ be a positive integer, let $V\subset\ZZ_p^r$, and let $S\subset\NN$ be an infinite subset. For each $1\leq k\leq r$, let $P_k\in\ZZ_p\<z\>$ 
and consider the function $P\colon S\lra\ZZ_p^r$ given by
\[
P(n):=(P_1(n),\dots,P_r(n)).
\]
Suppose the set $\{n\in S:P(n)=y\}$ is empty if $y\in \ZZ_p^r\setminus V$ and is finite if $y\in V$. Then there exists $N\geq0$ such that
\[
|\{n\in S:P(n)=y\}|\leq N
\]
for all $y \in V$.
\end{lemma}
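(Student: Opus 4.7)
The plan is to reduce the lemma to a uniform application of Strassmann's theorem for a single non-constant coordinate. Recall Strassmann's theorem: for nonzero $Q(z)=\sum_{n\ge0}b_nz^n\in\ZZ_p\<z\>$, the number of zeros of $Q$ in $\ZZ_p$ is at most $N(Q)$, defined as the largest $n$ with $|b_n|_p=|Q|_{\rm Gauss}$. Since $\{n\in S:P(n)=y\}\subseteq\{n\in\ZZ_p:P_k(n)=y_k\}$ for every $k$, it suffices to bound the number of zeros of $P_k(z)-y_k$ in $\ZZ_p$ uniformly in $y_k\in\ZZ_p$ for some fixed index $k$ with $P_k$ non-constant.

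First I would verify that at least one $P_k$ is non-constant. If every $P_k$ were constant, then $P$ would be identically equal to some $c\in\ZZ_p^r$, and since $S$ is infinite, the preimage of $c$ under $P|_S$ would be infinite. The hypothesis then forces $c\notin V$ and hence the preimage of $c$ should be empty, a contradiction. So fix a non-constant $P_k$, say $k=1$, and write $P_1(z)=\sum_{n\ge0}a_nz^n$. Let $M:=\max_{n\ge1}|a_n|_p$, which is strictly positive, and let $N_0$ be the largest index $n\ge1$ achieving $|a_n|_p=M$.

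For any $y_1\in\ZZ_p$, the series $Q(z):=P_1(z)-y_1$ lies in $\ZZ_p\<z\>$ and differs from $P_1$ only in its constant coefficient. If $|a_0-y_1|_p\le M$, then $|Q|_{\rm Gauss}=M$, still attained at index $N_0$, so Strassmann gives at most $N_0$ zeros in $\ZZ_p$. Otherwise $|a_0-y_1|_p>M$, and the Gauss norm is attained only at $n=0$, so $Q$ has no zeros at all. Either way, $|\{n\in\ZZ_p:P_1(n)=y_1\}|\le N_0$, so $N:=N_0$ uniformly bounds $|\{n\in S:P(n)=y\}|$ for $y\in V$. The argument reduces cleanly to a one-variable Strassmann estimate, so there is no serious obstacle; the only subtle point is ruling out the all-constants case, which is handled by combining the infiniteness of $S$ with the hypothesis that preimages are empty outside $V$ and finite inside $V$.
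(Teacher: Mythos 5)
Your proof is correct and follows essentially the same route as the paper's: show that some coordinate $P_k$ must be non-constant (using that $S$ is infinite together with the hypotheses on the fibers), then bound the fibers of that single coordinate uniformly over all shifts via Strassman's theorem. The one substantive difference is in how the uniform constant is extracted, and here your version is the more careful one. The paper sets $D(Q)=\max\{m:|c_m|_p=|Q|_{\rm Gauss}\}$ and asserts that $Q(z)-\alpha$ has at most $1+D(Q)$ zeros for every $\alpha\in\ZZ_p$; as stated this can fail when the constant coefficient strictly dominates the others in absolute value (for instance $Q(z)=1+pz+pz^2$ has $D(Q)=0$, yet $Q(z)-1=pz(1+z)$ has the two zeros $z=0$ and $z=-1$ in $\ZZ_p$). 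Your case split on whether $|a_0-y_1|_p\le M$, with $M$ and $N_0$ computed only over indices $n\ge1$, is exactly the right way to make Strassman's bound uniform in the shift, and it yields the valid constant $N=N_0$. So your argument is not merely correct; it also repairs a small slip in the paper's version of this step (the paper itself uses the correct variant of this device later, in the proof of its Lemma on the non-constant $H$, where the index $L\ge1$ is chosen to dominate all higher coefficients).
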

\begin{proof}
We may assume $S$ is infinite since otherwise we can take $N=|S|$. We claim that $P_k(z)$ is not a constant power series for some $k$. Suppose to the contrary that $P_k(z)=c_k\in\ZZ_p$ for each $k$. If $y:=(c_1,\dots,c_r)\in\ZZ_p^r\setminus V$, then we can take $N=0$. If $y\in V$, then $\{n\in S:P(n)=y\}=S$ which is infinite, contradicting the hypotheses of the lemma.

We have therefore shown that some $P_k(z)$ is non-constant. Let $\mathcal{K}$ be the set of $k$ for which $P_k(z):=\sum_{m\geq0} c_{k,m}z^m$ is non-constant. Given any non-constant element $Q(z):=\sum_{m\geq0} c_mz^m$ of $\ZZ_p\<z\>$, let
\begin{equation}
\label{eqn:max-coeff-Gauss}
D(Q):=\max\{m:|c_m|=|Q|_{\rm Gauss}\}.
\end{equation}
Recall from Strassman's Theorem (see \cite{strassman} or \cite[Theorem 4.1, p.~62]{cassels}) that the number of zeros of $Q(z)$ is bounded by $D(Q)$. As a result, if $\alpha\in\ZZ_p$, then the number of zeros of $Q(z)-\alpha$ is bounded by $1+D(Q)$. Letting
\[
N:=1+\max_{k\in\mathcal{K}} D(P_k),
\]
we see then that for all $(y_1,\dots,y_r)\in\ZZ_p^r$, the number of simultaneous zeros of $P_1(z)-y_1$, $\dots$, $P_r(z)-y_r$ is bounded by $N$. In particular, $|\{n\in S:P(n)=y\}|\leq N$ for all $y\in V$.
\end{proof}
This concludes the proof of Theorem~\ref{thm:uniform-bound-fibers}.
\end{proof}

\begin{proof}[Proof of Theorem~\ref{thm:gaps}.] 
As before, at the expense of replacing $\Phi$ by an iterate, we may assume $X$ is irreducible. Furthermore, arguing as in the last paragraph of the introduction, we may assume $\O_\Phi(x)$ is Zariski dense.

Let $K$ be a number field such that $X$, $\Phi$, and $f$ are defined over $K$ and moreover, $x\in X(K)$. As proven in \cite{Schanuel}, there exists a constant $c_0>0$ such that for each real number  $N\ge 1$, there exist less than $c_0N^2$ algebraic points in $K$ of logarithmic height bounded above by $\log(N)$. So, there exists a constant $c_1>1$ such that for each real number $N\ge 1$, there are less than $c_1^N$ points in $K$ of logarithmic height bounded above by $N$.

Arguing as in the proof of Theorem~\ref{thm:uniform-bound-fibers}, we can find a suitable prime number $p$, a model $\cX$ of $X$ over some finitely generated $\bZ$-algebra $R$ which embeds into $\bZ_p$ such that the endomorphism $\Phi$ extends to an endomorphism of $\cX$, and a section ${\rm Spec}(\bZ_p)\lra \cX$ extending $x$; we continue to denote by $\Phi$ and $x$ the endomorphism of $\cX$ and the section ${\rm Spec}(\bZ_p)\lra \cX$, respectively. At the expense of replacing both $\Phi$ and $x$ by suitable iterates, we may assume the reduction of $x$ modulo $p$ (called $\overline{x}$) is fixed under the induced action of $\overline{\Phi}$ on the special fiber of $\cX$. Consider the $p$-adic neighborhood in $B\subset\X(\ZZ_p)$ consisting of all points whose reduction modulo $p$ is $\overline{x}$. Then there is an analytic isomorphism $\iota\colon B\to\ZZ_p^m$ so that in these coordinates
 $$\overline{x}=(0,\ldots, 0)\in \bF_p^m$$ 
 and 
$\Phi$ is given by $(x_1,\cdots, x_m)\mapsto (\phi_1(x_1,\dots, x_m),\cdots, \phi_m(x_1,\dots, x_m))$, where
$$\phi_i(x_1,\dots, x_m)\equiv \sum_{j=1}^m a_{i,j}x_j\pmod{p}$$
 for each $i=1,\dots, m$, for some suitable constants $a_{i,j}\in\bZ_p$ (for more details, see \cite[Section~11.11]{DML-book}). Applying \cite[Theorem~11.11.1.1]{DML-book} (see also the proof of \cite[Theorem~11.11.3.1]{DML-book}), there exists a $p$-adic analytic function $G\colon\bZ_p\lra \bZ_p^m$ such that for each $n\ge 1$, we have
\begin{equation}
\label{eq:p-adic approximation}
\|\Phi^n(x)-G(n)\|\le p^{-n},
\end{equation}
where for any point $(x_1,\dots, x_m)\in\bZ_p^m$, we let
$$\|(x_1,\dots, x_m)\|:=\max_{1\leq i\leq m} |x_i|_p.$$

As in the proof of Theorem~\ref{thm:uniform-bound-fibers}, let $V_1\simeq\AA^1$ and $V_2\simeq\AA^1$ be the standard affine cover of $\PP^1$, and let $\{U_{ij}\}$ be a finite open affine cover of $\X$ minus the indeterminacy locus of $f$ such that $f(U_{ij})\subset V_i\simeq\AA^1$. Let
\[
S_{ij}:=\{n:\Phi^n(x)\in U_{ij}\}.
\]
Since $f|_{U_{ij}}$ is given by a polynomial with $p$-adic integral coefficients, there exist $H_{ij}(z)\in\ZZ_p\<z\>$ such that
\[
f(G(n))=H_{ij}(n)
\]
whenever $n\in S_{ij}$. Notice that if $f(\Phi^n(x))=y$, then since $x$ extends to a $\ZZ_p$-point of $\X$, necessarily $y\in \PP^1(K)$ extends to a $\ZZ_p$-point of $\PP^1$ as well. Thus, we need only concern ourselves with roots of $H_{ij}(z)-t$ for $t\in\ZZ_p$.

\begin{lemma}
\label{l:constant-rate-of-return}
There is some choice of $i$ and $j$ with the following properties: 
\begin{enumerate}[label=$(\arabic*)$]
\item\label{constant-rate-of-return::infinite} $\{f(\Phi^n(x)):n\in S_{ij}\}$ is an infinite set,
\item\label{constant-rate-of-return::Banach-density} $\NN\smallsetminus S_{ij}$ has upper Banach density zero,
\item\label{constant-rate-of-return::return} there exists a constant $\kappa$ and a sequence $M_1<M_2<\dots$ such that 
\[
\#\{n\in S_{ij}:n\leq \kappa M_\ell\}\geq M_\ell.
\]
\end{enumerate}
In fact properties \ref{constant-rate-of-return::infinite} and \ref{constant-rate-of-return::Banach-density} hold for all $i$ and $j$.
\end{lemma}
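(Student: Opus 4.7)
The plan is to combine the $p$-adic analytic interpolation $G$ of the orbit with Strassman's theorem and a pigeonhole argument.

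I would first address property~$(1)$. Since $f(\OO_\Phi(x))$ is infinite by hypothesis and $\NN=\bigcup_{i,j}S_{ij}$ is a finite cover, pigeonhole immediately yields some $(i,j)$ for which $\{f(\Phi^n(x)):n\in S_{ij}\}$ is infinite. The stronger claim that this holds for every $(i,j)$ with $S_{ij}$ infinite follows by showing that $H_{ij}(z)\in\ZZ_p\<z\>$ is non-constant: otherwise $H_{ij}\equiv c$ and the identity $f(G(z))\equiv c$ on $\ZZ_p$, combined with the approximation $\|\Phi^n(x)-G(n)\|\leq p^{-n}$ and the Zariski density of $\OO_\Phi(x)$, would force $f$ to be constant on the irreducible $X$; Strassman's theorem then makes $\{H_{ij}(n):n\in S_{ij}\}$ infinite.

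For property~$(2)$, I would write $\X\smallsetminus U_{ij}$ locally as the zero locus of a regular function $q_{ij}$ and set $Q_{ij}(z):=q_{ij}(G(z))\in\ZZ_p\<z\>$. For $n\in S_{ij}^c$ the approximation gives $|Q_{ij}(n)|_p\leq Cp^{-n}$ for some constant $C$. In the non-degenerate case $Q_{ij}\not\equiv 0$, Strassman provides finitely many zeros $z_1,\dots,z_r\in\ZZ_p$ of orders $d_1,\dots,d_r$, and a Weierstrass-type factorization near each zero translates the bound into the condition $|n-z_k|_p\leq p^{-n/d_k+O(1)}$ for some $k$; a direct geometric-series count over the resulting exponentially-tight residue classes modulo powers of $p$ shows that the total number of integers $n$ satisfying such a bound is finite, so $S_{ij}^c$ is finite and hence has upper Banach density zero.

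Property~$(3)$ is then essentially automatic from~$(2)$: since $S_{ij}$ has lower Banach density~$1$, taking $M_\ell:=|S_{ij}\cap[1,\ell]|$ yields $M_\ell/\ell\to 1$, and any constant $\kappa>1$ makes $\#\{n\in S_{ij}:n\leq\kappa M_\ell\}\geq M_\ell$ hold for all sufficiently large $\ell$. The main obstacle I anticipate is the degenerate case $Q_{ij}\equiv 0$, which geometrically says $G(\ZZ_p)$ lies inside the subvariety $\X\smallsetminus U_{ij}$. Since $\bigcap_{i,j}(\X\smallsetminus U_{ij})$ equals the indeterminacy locus of $f$, which the orbit avoids, this degeneracy cannot occur for every $(i,j)$ simultaneously; ruling it out for each relevant $(i,j)$ individually, perhaps by a careful choice or refinement of the affine cover, is the delicate point I expect to require the most care.
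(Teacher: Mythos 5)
Your strategy diverges substantially from the paper's, and it contains two genuine gaps. For property (1), you argue that if $H_{ij}$ is constant then $f$ would be forced to be constant on $X$; this is false. Unlike the \'etale setting of Theorem~\ref{thm:uniform-bound-fibers}, here $G$ only satisfies $\|\Phi^n(x)-G(n)\|\le p^{-n}$ --- it does \emph{not} interpolate the orbit exactly --- so $f(\Phi^n(x))\ne H_{ij}(n)$ in general, and $H_{ij}$ can perfectly well be constant while $f(\O_\Phi(x))$ is infinite. Indeed, this is precisely the hard case the paper must treat separately in Lemma~\ref{lem:approx_2}, so it cannot be dismissed; for the same reason, infiniteness of $\{H_{ij}(n)\}$ does not transfer to $\{f(\Phi^n(x))\}$. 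The paper's argument for (1) is purely geometric and much shorter: if $\{f(\Phi^n(x)):n\in S_{ij}\}$ were a finite set $\{t_1,\dots,t_k\}$, the Zariski-dense orbit would lie in the proper closed subset $(X\smallsetminus U_{ij})\cup\bigcup_{\ell}f^{-1}(t_\ell)$, a contradiction.

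For property (2), your Strassman argument breaks in two places. The degenerate case $Q_{ij}\equiv 0$ is a live possibility precisely for the pair $(i,j)$ you singled out by pigeonhole in step (1); observing that it cannot occur for all $(i,j)$ simultaneously does not help, since you need (1)--(3) for one common pair (and the lemma asserts (2) for \emph{every} pair). Moreover, even when $Q_{ij}\not\equiv0$, the set of integers $n$ with $|n-z_k|_p\le p^{-n/d_k+O(1)}$ need not be finite: a root $z_k\in\ZZ_p$ can be the $p$-adic limit of a rapidly growing sequence of integers satisfying exactly such congruences, so your count only yields exponentially growing gaps (hence upper Banach density zero), not finiteness. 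The paper sidesteps all of this by applying the Dynamical Mordell--Lang theorem for Noetherian spaces \cite[Corollary~1.5]{DML-noetherian} to the closed set $X\smallsetminus U_{ij}$: the return set is a finite union of arithmetic progressions together with a set of upper Banach density zero, and Zariski density of the orbit rules out any nontrivial progression. Your derivation of (3) from (2) would be fine, but the paper obtains (3) by a direct pigeonhole with $\kappa$ equal to the number of patches, independently of (2).
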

\begin{proof}
We first prove property \ref{constant-rate-of-return::infinite} for all $i,j$. If $\{f(\Phi^n(x)):n\in S_{ij}\}=\{t_1,\dots,t_k\}$ is a finite set, then
\[
\O_\Phi(x)\subset(X\smallsetminus U_{ij})\cup\bigcup_{1\leq\ell\leq k}f^{-1}(t_\ell)
\]
which contradicts the fact that $\O_\Phi(x)$ is Zariski dense.

We next prove property \ref{constant-rate-of-return::Banach-density} for all $i,j$. Since $Z:=X\setminus U_{ij}$ is a closed subvariety, \cite[Corollary 1.5]{DML-noetherian} tells us $\NN\smallsetminus S$ is a union of at most finitely many arithmetic progressions and a set of upper Banach density zero. Since $\O_\Phi(x)$ is Zariski dense, $\NN\smallsetminus S$ cannot contain any non-trivial arithmetic progressions. Indeed, if there exists $0\leq b<a$ such that $\{an+b:n\in\NN\}\subset\NN\smallsetminus S$, then writing $\O_\Phi(x)=\bigcup_{0\leq\ell<a}\Phi^\ell\O_{\Phi^a}(x)$, we see $\Phi^\ell\O_{\Phi^a}(x)$ is Zariski dense for some $\ell$; applying a suitable iterate of $\Phi$, we see $\Phi^b\O_{\Phi^a}(x)$ is also Zariski dense, contradicting the fact that $\Phi^b\O_{\Phi^a}(x)$ is contained in the proper closed subvariety $Z$. Thus, $\NN\smallsetminus S$ has upper Banach density zero.

Next, we turn to property \ref{constant-rate-of-return::return}. Let $\mathcal{U}$ denote our set of affine patches $U_{ij}$, and let $\kappa=|\mathcal{U}|$. For each $M\geq2$, there must exist some element $g(M)\in\mathcal{U}$ and $0\leq n_1<n_2<\dots<n_M\leq (M-1)\kappa$ such that $\Phi^{n_\ell}(x)\in g(M)$ for $1\leq\ell\leq M$. Let $i$ and $j$ be such that $g(M)=U_{ij}$ for infinitely many $M\geq2$. With this choice of $i$ and $j$, by construction, there is a sequence $M_1<M_2<\dots$ such that $S_{ij}$ contains at least $M_\ell$ of the integers $0\leq n\leq(M_\ell-1)\kappa$. As a result,
\[
\#\{n\in S_{ij}:n\leq \kappa M_\ell\}\geq M_\ell
\]
finishing the proof of lemma.
\end{proof}

Let $i$ and $j$ be as in Lemma \ref{l:constant-rate-of-return}. For ease of notation, let $H(z)=H_{ij}(z)$ and $S=S_{ij}$. Since
\[
\limsup_{n\to\infty} \frac{h(f(\Phi^n(x)))}{\log(n)}\geq \limsup_{n\in S, n\to\infty} \frac{h(f(\Phi^n(x)))}{\log(n)}
\]
it suffices to show the latter is positive.


We split our proof now into two cases which are analyzed separately in Lemmas~\ref{lem:approx_1}~and~\ref{lem:approx_2}.  Before giving the proof of Lemma~\ref{lem:approx_1}, we recall that by the Weierstrass Preparation Theorem \cite[5.2.2]{non-arch-analysis}, 
if $P(z):=a_0+a_1 z+a_2 z^2+ \cdots \in \mathbb{Q}_p\langle z\rangle$ is nonzero and $D=D(P)$ as in (\ref{eqn:max-coeff-Gauss}), then $P(z)=Q(z)u(z)$ where $u(z)$ is a unit in $\mathbb{Q}_p\langle z\rangle$ with $|u(z)|_{\rm Gauss}=1$, and $Q(z)$ is a polynomial of degree $D$ whose leading coefficient has $p$-adic norm equal to $|P(z)|_{\rm Gauss}$. Combined with \cite[5.1.3 Proposition 1]{non-arch-analysis}, we see $u(z)=c+pu_0(z)$ with $|c|_p=1$ and $|u_0|_{\rm Gauss} \le 1$. In particular, $|u(n)|_p=1$ for all $n\in\NN$.

\begin{lemma}
\label{lem:approx_1}
If $H(z)$ is non-constant, then the conclusion of Theorem~\ref{thm:gaps} holds.
\end{lemma}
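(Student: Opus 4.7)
The plan is to assume, for contradiction, that $\limsup_{n\to\infty,\,n\in S} h(v_n)/\log n = 0$ where $v_n := f(\Phi^n(x))$, and to derive a contradiction by combining a counting estimate over $K$ with the $p$-adic rigidity of $H$ supplied by Weierstrass Preparation. Fix $\epsilon>0$ with $\epsilon \log c_1 < 1/2$, where $c_1$ is the Schanuel constant recalled at the start of the proof of Theorem~\ref{thm:gaps}. For $n\in S$ sufficiently large, the contradiction hypothesis yields $h(v_n)<\epsilon\log n$, so the set $\{v_n : n\in S,\, n\le \kappa M_\ell\}$ lies in a collection of at most $(\kappa M_\ell)^{\epsilon \log c_1}$ algebraic numbers in $K$. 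Combined with property \ref{constant-rate-of-return::return} of Lemma~\ref{l:constant-rate-of-return} -- which supplies at least $M_\ell$ integers $n\in S\cap[1,\kappa M_\ell]$ -- the pigeonhole principle produces some $y\in \ZZ_p$ that is attained as $v_n$ by at least a constant times $M_\ell^{1/2}$ such integers.

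For each such $n$, combining $\|\Phi^n(x)-G(n)\|\le p^{-n}$ with the fact that $f|_{U_{ij}}$ is given by a polynomial with $\ZZ_p$-integral coefficients (and hence is non-expanding in the $p$-adic norm) yields $|H(n)-y|_p\le p^{-n}$. Applying Weierstrass Preparation to $H(z)-y\in\ZZ_p\<z\>$, I write $H(z)-y=Q_y(z)u_y(z)$ with $u_y$ a unit satisfying $|u_y(n)|_p=1$, so $|Q_y(n)|_p\le p^{-n}$. Writing $H(z)=\sum_{m\ge0} c_m z^m$ and setting $M:=\max_{m'\ge1}|c_{m'}|_p$ and $D^*:=\max\{m\ge1 : |c_m|_p=M\}$, non-constancy of $H$ guarantees $M>0$ and $D^*\ge 1$. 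A direct check shows $\deg Q_y=D^*$ whenever $|c_0-y|_p\le M$; in the opposite case $Q_y$ is a constant of $p$-adic norm exceeding $M$, so the inequality $|Q_y(n)|_p\le p^{-n}$ forces $n\le -\log_p M$, yielding at most $O(1)$ eligible integers $n$ and hence an immediate contradiction with the $M_\ell^{1/2}$ lower bound.

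The hard part is the remaining case $\deg Q_y=D^*$, where I must show: the number of integers $n\in[1,B]$ with $|Q_y(n)|_p\le p^{-n}$ is $O(\log B)$, with implied constant depending only on $D^*$ and $M$. To this end I would factor $Q_y(z)=c\prod_{i=1}^{D^*}(z-r_i)$ over $\overline{\QQ_p}$ using the unique extension of $|\cdot|_p$. Any root $r_i\notin\ZZ_p$ keeps $|n-r_i|_p$ bounded below for $n\in\ZZ$ and thus contributes only finitely many $n$. For a root $r_i\in\ZZ_p$, pigeonholing through the product forces $v_p(n-r_i)\ge n/D^*-O(1)$; if $n_1<n_2<\dots$ are the integers $\le B$ satisfying this inequality for a fixed $r_i$, then $p^{n_j/D^*-O(1)}\mid(n_{j+1}-n_j)$, forcing the recursion $n_{j+1}\ge n_j+p^{n_j/D^*-O(1)}$, whose exponential growth caps the length of the sequence at $O(\log B)$. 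Summing over the at most $D^*$ relevant roots in $\ZZ_p$ establishes the key estimate, which contradicts the $\mathrm{const}\cdot M_\ell^{1/2}$ lower bound from the pigeonhole step and completes the proof.
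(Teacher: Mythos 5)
Your proposal follows essentially the same route as the paper's proof: Weierstrass preparation writes $H(z)-y=Q_y(z)u_y(z)$ with $|u_y(n)|_p=1$ and the leading coefficient of $Q_y$ bounded below uniformly in $y$, so any $n$ with $|H(n)-y|_p\le p^{-n}$ lies $p$-adically within $p^{-n/D^*+O(1)}$ of some root, consecutive such $n$ attached to the same root are exponentially far apart archimedeanly, hence each value is attained $O(\log B)$ times among $n\le B$; combining this with Schanuel's count and property~\ref{constant-rate-of-return::return} of Lemma~\ref{l:constant-rate-of-return} then finishes the argument (you argue by contradiction where the paper argues directly and extracts the explicit lower bound $1/\log c_4$, but that difference is cosmetic since the lemma only asks for positivity). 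One small caveat: your case split on whether a root lies in $\ZZ_p$ is both unnecessary and, as written, slightly leaky --- the claim that a root $r_i\notin\ZZ_p$ ``contributes only finitely many $n$'' gives a bound depending on $d(r_i,\ZZ_p)$, hence on $y$, whereas your final pigeonhole requires the $O(\log B)$ count to be uniform in $y$; the fix is simply to apply the same gap recursion $n_{j+1}\ge n_j+M^{1/D^*}p^{\hspace{0.1em}n_j/D^*}$ to \emph{every} root of $Q_y$ in $\overline{\QQ}_p$ (the ultrametric inequality makes no use of the root lying in $\ZZ_p$), which is exactly what the paper does.
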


\begin{proof}[Proof of Lemma~\ref{lem:approx_1}.]
Writing $H(z)=a_0+a_1z+a_2z^2+\cdots\in\ZZ_p\<z\>$, there exists some $L\ge 1$ such that $|a_L|_p>|a_j|_p$ for all $j>L$. As proven in Lemma~\ref{l:uniform-bound-piecewise-analytic}, since $H(z)$ is not constant, there exists a uniform bound $C$ such that for each $t\in \mathbb{Z}_p$, the number of solutions to $H(z)=t$ is at most $C$. Furthermore, if $n$ is an element of $S$ such that $f(\Phi^n(x))=t$, then equation \eqref{eq:p-adic approximation} 
yields $$|H(n)-t|_p\le p^{-n}.$$ 
As mentioned above, by the Weierstrass Preparation Theorem, we can write
\[
H(z)-t= q_t(z)u_t(z)
\]
with $q_t(z)$ a polynomial of degree $D(H-t)\leq L$ and $u_t(z)$ a unit of Gauss norm $1$; moreover, the leading coefficient of $q_t(z)$ has $p$-adic norm equal to the Gauss norm of $H-t$. Hence, we can write
\[
q_t(z)=b_t(z-\beta_{1,t})\cdots (z-\beta_{D(H-t),t})
\]
with $b_t\in \mathbb{Q}_p$, the $\beta_{j,t}\in \QQbar_p$, and
\[
|b_t|_p=|H-t|_{\rm Gauss}\geq |a_L|_p.
\]
We have therefore bounded $|b_t|_p$ below independent of $t\in\ZZ_p$. As noted before the proof of the lemma, we know $|u_t(n)|_p=1$ for all $t\in\ZZ_p$ and $n\in\NN$. Hence, there is a constant $c_2>0$ (independent of $t$) such that for all $t\in\ZZ_p$, if $|H(n)-t|_p\le p^{-n}$ then there exists $1\leq j\leq D(H-t)$ such that
\[
|n-\beta_{j,t}|_p<c_2 p^{-\frac{n}{D(H-t)}}\leq c_2 p^{-\frac{n}{L}}.
\]
So, if $n_1,\ldots ,n_{L+1}$ are distinct elements of $S$ with $|H(n_i)-t|_p\le p^{-n_i}$ for $i=1,\ldots ,L+1$ then there exist $k_1,k_2$ with $k_1\neq k_2$ and $j$ such that $|n_{k_1}-\beta_{j,t}|_p<c_2 p^{-n_{k_1}/L}$ and $|n_{k_2}-\beta_{j,t}|_p<c_2 p^{-n_{k_2}/L}$.  Consequently, 
\[
|n_{k_1}-n_{k_2}|_p<c_2 p^{-\min(n_{k_1},n_{k_2})/L}.
\]
Hence, letting $|\cdot |$ be the usual Archimedean absolute value, we have that
\[
|n_{k_1}-n_{k_2}|>c_2 p^{\min(n_{k_1},n_{k_2})/L};
\]
therefore there exists a positive constant $c_3$ (independent of $t$, since both $L$ and $c_2$ are independent of $t$) such that for all $M\ge 1$ and all $t\in\bP^1(K)$,
\begin{equation}
\label{eq:c3-bound}
\#\{n\le M:n\in S \textrm{\ and\ } f(\Phi^n(x))=t\}\leq c_3\log(M).\footnote{In fact, we have a substantially better bound. Let $\exp^k$ denote the $k$-th iterate of the exponential function and let $L_p(M)$ be the smallest integer $k$ such that $\exp^k(p)>M$. Then $\#\{n\le M: n\in S \textrm{\ and\ } f(\Phi^n(x))=t\}\leq c_3 L_p(M)$, however we will not need this stronger bound.}
\end{equation}
As an aside, we note that this type of gap is similar to the one obtained for the Dynamical Mordell--Lang problem in \cite{gap-Compo}.

Now, let $\kappa$ be as in Lemma \ref{l:constant-rate-of-return}, and choose a constant $c_4>1$ such that
\begin{equation}
\label{eq:c43}
c_3\cdot \log(\kappa c_4^r)\cdot c_1^r<c_4^{r-1}
\end{equation}
for all sufficiently large $r\in\RR$, e.g.~we may take $c_4:=2c_1$. Let $M_1<M_2<\dots$ be as in Lemma \ref{l:constant-rate-of-return}, and let
\[
N_\ell=\lceil\log_{c_4}(M_\ell)\rceil.
\]
Property \ref{constant-rate-of-return::return} of Lemma \ref{l:constant-rate-of-return} implies
\begin{equation}
\label{eq:constant-rate-of-return-inequality}
\#\{n\leq\kappa c_4^{N_\ell}:n\in S\}\geq M_\ell>c_4^{N_\ell-1}.
\end{equation}

To conclude the proof, we show that for all $\ell$ sufficiently large, there exists some $n_\ell\le \kappa c_4^{N_\ell}$ with the property that $n_\ell\in S$ and $h(f(\Phi^{n_\ell}(x)))\ge N_\ell$. If this were not the case, then since there are less than $c_1^{N_\ell}$ algebraic numbers $t\in\bP^1(K)$ of logarithmic Weil height bounded above by $N_\ell$, by (\ref{eq:constant-rate-of-return-inequality}) there would be such an algebraic number $t$ with 
\[
\#\{n\le \kappa c_4^{N_\ell}:n\in S \textrm{\ and\ } f(\Phi^n(x))=t\}>\frac{c_4^{N_\ell-1}}{c_1^{N_\ell}}>c_3\log(\kappa c_4^{N_\ell})
\]
and this violates inequality (\ref{eq:c3-bound}). We have therefore proven our claim that for all $\ell$ sufficiently large, there exists a positive integer $n_\ell\leq \kappa c_4^{N_\ell}$ with $h(f(\Phi^{n_\ell}(x)))\ge N_\ell$. So, 
\[
\limsup_{n\to\infty}\frac{h(f(\Phi^n(x)))}{\log(n)} \geq \lim_{\ell\to\infty}\frac{N_\ell}{\log(\kappa)+N_\ell\log(c_4)}=\frac{1}{\log(c_4)}>0
\]
as desired in the conclusion of Theorem~\ref{thm:gaps}.
\end{proof}

\begin{lemma}
\label{lem:approx_2}
If $H(z)$ is a constant, then $\limsup_{n\to\infty}\frac{h(f(\Phi^n(x)))}{\log(n)}=\infty$.
\end{lemma}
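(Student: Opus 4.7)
My plan is to first derive the key $p$-adic estimate: since $H \equiv c$, for every $n \in S$ the algebraic number $t_n := f(\Phi^n(x)) \in K$ satisfies
\[
|t_n - c|_p \le C p^{-n}
\]
for a constant $C$, by the same combination of \eqref{eq:p-adic approximation} with the $1$-Lipschitz property of $f|_{U_{ij}}$ that was used in the proof of Lemma~\ref{lem:approx_1}. Enumerate the distinct values of $\{t_n : n \in S\}$ (infinite by property~(1) of Lemma~\ref{l:constant-rate-of-return}) as $T_1, T_2, \ldots$ in order of first occurrence at indices $m_1 < m_2 < \cdots$, and set $\bar n_k := \sup\{n \in S : t_n = T_k\}$. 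Suppose for contradiction that there exists $A > 0$ with $h(t_n) \le A \log n$ for all sufficiently large $n \in S$; in particular $h(T_k) \le A \log m_k$ for all large $k$. The main tool in both cases below is the Liouville inequality: for a fixed number field $L$ and any nonzero $\alpha \in L$, $|\alpha|_p \ge e^{-D h(\alpha)}$ where $D = D(L)$ comes from the product formula.

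\emph{Case A: $c \in \Qbar$.} Let $L = K(c)$. For any $k$ with $T_k \ne c$, applying Liouville to $T_k - c \in L^\times$ together with $|T_k - c|_p \le Cp^{-\bar n_k}$ yields
\[
\bar n_k \log p \le D(h(T_k) + h(c) + \log 2) + \log C \le C_1 \log m_k + O(1),
\]
with $C_1 = DA/\log p$. Combined with $\bar n_k \ge m_k$, this forces $m_k = O(1)$, and since infinitely many $k$ have $T_k \ne c$ (so $m_k \to \infty$ along such a subsequence), we reach the contradiction.

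\emph{Case B: $c \notin \Qbar$.} Now $T_k \ne c$ for every $k$, so each $\bar n_k < \infty$. For any $k < k_0$, applying Liouville inside $K$ to $T_k - T_{k_0} \in K^\times$ together with $|T_k - T_{k_0}|_p \le Cp^{-\min(\bar n_k, \bar n_{k_0})}$ and the bound $h(T_k) + h(T_{k_0}) \le 2A \log m_{k_0}$ yields $\min(\bar n_k, \bar n_{k_0}) \le C_2 \log m_{k_0} + O(1)$. The possibility of $\bar n_{k_0}$ realizing the minimum forces $m_{k_0} \le \bar n_{k_0} \le C_2 \log m_{k_0} + O(1)$, impossible for $m_{k_0}$ large; hence $\bar n_k \le C_2 \log m_{k_0} + O(1)$ for every $k < k_0$. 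By property~(2) of Lemma~\ref{l:constant-rate-of-return}, for $m_{k_0}$ large the set $S \cap (C_2 \log m_{k_0} + O(1),\, m_{k_0})$ is nonempty, and any such $n$ satisfies $t_n = T_j$ for some $j < k_0$, whence $n \le \bar n_j \le C_2 \log m_{k_0} < n$, a contradiction. The main obstacle is precisely Case~B: since $c$ lives only in $\ZZ_p$ we cannot use it directly, and must instead play the $T_k$'s against each other via a two-variable Liouville inequality, then invoke the density of $S$ to locate an index in a range where no $T_j$ is permitted to appear.
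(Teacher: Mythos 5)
Your proof is correct, and it rests on the same three pillars as the paper's argument: the estimate $|t_n-c|_p\le p^{-n}$ coming from \eqref{eq:p-adic approximation}, a Liouville/product-formula lower bound on the $p$-adic absolute value of a nonzero element of a fixed number field in terms of its height, and property \ref{constant-rate-of-return::Banach-density} of Lemma~\ref{l:constant-rate-of-return}. The organization, however, is genuinely different. The paper selects consecutive elements $n_{2k-1}<n_{2k}$ of $S$ at which the value of $f$ changes and between which $S$ is empty, applies the height inequality to the single difference $t_{2k}-t_{2k-1}$, and splits according to which of the two heights is larger; the hard subcase produces a gap $n_{2k}>C^{\,n_{2k-1}}$ in $S$, contradicting Banach density. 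You instead do global bookkeeping of first and last occurrences of each distinct value and split on whether the constant $c$ is algebraic. Your Case A is a shortcut with no analogue in the paper (Liouville against $c$ itself immediately forces $m_k\le C_1\log m_k+O(1)$; note you do not even need $\bar n_k$ there, since applying the estimate at $n=m_k$ already suffices), a shortcut the paper deliberately avoids by only ever comparing two orbit values, which is why it never has to ask whether $c$ is algebraic. Your Case B is in substance the paper's hard subcase: playing $T_k$ against $T_{k_0}$ via the ultrametric inequality and Liouville confines $S\cap[0,m_{k_0})$ to $[0,C_2\log m_{k_0}+O(1)]$, and Banach density rules out the resulting arbitrarily long gaps. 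Both proofs are complete; the paper's is slightly leaner, while yours makes explicit the role of the (possibly transcendental) limit value $c$ and extracts a somewhat stronger structural statement about where in $S$ each value can recur.
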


\begin{proof}[Proof of Lemma~\ref{lem:approx_2}.]
By property \ref{constant-rate-of-return::infinite} of Lemma \ref{l:constant-rate-of-return}, we can find a sequence $n_1<n_2<\dots$ with the $n_i\in S$ such that
\[
f\!\left(\Phi^{n_{2k-1}}(x)\right)\neq f\!\left(\Phi^{n_{2k}}(x)\right) \textrm{\ \ and\ \ } \{n\in S:n_{2k-1}<n<n_{2k}\}=\varnothing
\]
for all $k\geq1$.

Let $t_0:=H(n)$ (for all $n\in\mathbb{N}$) and for each $i\ge1$, let $t_i:=f\!\left(\Phi^{n_i}(x)\right)$. Then using the Taylor expansion around $t_0$, we conclude that 
\[
|t_{i}-t_0|_p\le p^{-n_i}.
\]
So, $|t_{2k}-t_{2k-1}|_p\le p^{-n_{2k-1}}$ and since $t_{2k}\ne t_{2k-1}$, we have that for all $k\ge 1$,
\begin{equation}
\label{eq:large height}
h\!\left(t_{2k}-t_{2k-1}\right)=h\!\left((t_{2k}-t_{2k-1})^{-1}\right)\ge c_5n_{2k-1},
\end{equation}
for a constant $c_5$ depending only on the number field $K$ and on the particular embedding of $K$ into $\mathbb{Q}_p$ (for example, for the usual embedding of $K$ into $\mathbb{Q}_p$, we may take $c_5:=\frac{1}{2[K:\mathbb{Q}]}$). Inequality \eqref{eq:large height} yields that 
\begin{equation}
\label{eq:one height is large}
\max\{h(t_{2k}), h(t_{2k-1})\}\ge \frac{1}{2}(c_5 n_{2k-1} - \log(2))
\end{equation}
since $h(a+b)\le h(a)+h(b)+\log(2)$ for any $a,b\in\Qbar$.

First suppose that $\max\{h(t_{2k}), h(t_{2k-1})\}=h(t_{2k-1})$ for infinitely many $k$. Then consider a subsequence $k_1<k_2<\dots$ where $\max\{h(t_{2k_j}), h(t_{2k_j-1})\}=h(t_{2k_j-1})$. Letting $m_j=n_{2k_j-1}$, we see
\[
h\!\left(f\!\left(\Phi^{m_j}(x)\right)\right)\ge \frac{1}{2}(c_5 m_j - \log(2))
\]
which shows $\limsup_{n\to\infty}\frac{h(f(\Phi^n(x)))}{\log(n)}=\infty$.

Thus, we may assume that $\max\{h(t_{2k}), h(t_{2k-1})\}=h(t_{2k})$ for all $k$ sufficiently large. We claim that
\begin{equation}
\label{eq:even-term-subseq-limsup-infty}
\limsup_{k\to\infty}\frac{h(f(\Phi^{n_{2k}}(x)))}{\log(n_{2k})}=\infty.
\end{equation}
If this is not the case, then there is some $C'>0$ such that for all sufficiently large $k$, we have
\[
C'>\frac{h(f(\Phi^{n_{2k}}(x)))}{\log(n_{2k})}\geq \frac{1}{2\log(n_{2k})}(c_5 n_{2k-1} - \log(2)),
\]
where we have made use here of inequality (\ref{eq:one height is large}). In particular, there is a constant $C>1$ such that for all $k$ sufficiently large,
\begin{equation}
\label{eqn:big-gaps}
n_{2k}>C^{\hspace{0.1em}n_{2k-1}}.
\end{equation}
Recalling that $S$ does not contain any positive integers between $n_{2k-1}$ and $n_{2k}$, inequality (\ref{eqn:big-gaps}) implies that $\NN\smallsetminus S$ has positive upper Banach density. This contradicts property \ref{constant-rate-of-return::Banach-density} of Lemma \ref{l:constant-rate-of-return}, and so our initial assumption that $C'>\frac{h(f(\Phi^{n_{2k}}(x)))}{\log(n_{2k})}$ is incorrect. This proves equation (\ref{eq:even-term-subseq-limsup-infty}), and hence Lemma \ref{lem:approx_2}.
\end{proof}

Clearly, Lemmas~\ref{lem:approx_1} and \ref{lem:approx_2} finish the proof of Theorem~\ref{thm:gaps}.
\end{proof}


\end{document}